\DeclareSymbolFontAlphabet{\mathcal}{symbols}
\theoremstyle{plain}
\newtheorem{theorem}{Theorem}
\newtheorem{proposition}{Proposition}[section]
\newtheorem{lemma}[proposition]{Lemma}
\theoremstyle{definition}
\newtheorem{definition}[proposition]{Definition}
\theoremstyle{remark}
\newtheorem{remark}[proposition]{Remark}
\newcommand{\secref}[1]{Section~\ref{#1}}
\newcommand{\thmref}[1]{Theorem~\ref{#1}}
\newcommand{\propref}[1]{Proposition~\ref{#1}}
\newcommand{\defref}[1]{Definition~\ref{#1}}
\renewcommand\l@subsection{\@tocline{2}{0pt}{2pc}{5pc}{}}
\renewcommand\l@subsubsection{\@tocline{3}{0pt}{4pc}{10pc}{}}
\def\R{{\mathbb R}}
\def\C{\mathbb{C}}
\def\HH{\mathbb{H}}
\def\R{\mathbb{R}}
\def\u{{\rm U}}
\def\id{{\rm id}}
\newcommand{\Sp}{\mathrm{Sp}}
\newcommand{\K}{\mathbb{K}}
\newcommand{\G}{\mathrm{G}}
\newcommand{\OO}{\mathrm{O}}
\newcommand{\U}{\mathrm{U}}
\newcommand{\XX}{\mathrm{X}}
\newcommand{\VV}{\mathrm{V}}
\newcommand{\WW}{\mathrm{W}}
\newcommand{\Hprod}[1]{\langle #1\rangle}
\newcommand{\cat}{\mathop{\mathrm{cat}}}		
\newcommand{\block}[2]{\addtolength{\arraycolsep}{-3pt}\begin{pmatrix}#1\cr#2\end{pmatrix}}
\renewcommand*\env@matrix[1][*\c@MaxMatrixCols c]{%
  \hskip 0pt
  \let\@ifnextchar\new@ifnextchar
  \array{#1}}
\begin{document}


\title[Cayley Transform on  Stiefel manifolds]{Cayley Transform on  Stiefel manifolds}

\author[E. Mac\'{\i}as-Virg\'os]{Enrique Mac\'{\i}as-Virg\'os}
\address{Departamento de Matem\'aticas\\
Universidade de Santiago de Compostela\\
15782 Spain}
\email{quique.macias@usc.es}

\author[M.-J. Pereira-S\'aez]{Mar\'{\i}a~Jos\'e~Pereira-S\'aez}
\address{Facultade de Econom\'{\i}a e Empresa, 
		Universidade da Coru\~na\\
		15071
		Spain.}
\email{maria.jose.pereira@udc.es}

\author[D. Tanr\'e]{Daniel Tanr\'e}
\address{D\'epartement de Math\'ematiques \\
Universit\'e de Lille 1 \\
59655 Villeneuve d'Ascq Cedex, France}
\email{Daniel.Tanre@univ-lille1.fr}

\date{\today}

\begin{abstract} 
We define a Cayley transform on  Stiefel manifolds. 
Applications to the  Lusternik-Schnirelmann category and optimisation problems are presented.
\end{abstract}


\subjclass[2010]{Primary 43A85; Secondary 58C99; 55M30}

\keywords{Quaternionic Stiefel manifold; Cayley transform; Lusternik-Schnirelmann category; Optimisation.}

\maketitle



\section*{Introduction}
Denote by $\K$  the algebra of either  the real numbers $\R$, the complex numbers $\C$  or  the quaternions $\HH$.
Let  
$\G(n)=\OO(n,\K)$ be the  Lie group of matrices $A\in \K^{n\times n}$ such that
$AA^*=I_n$, where $A^*=\bar{A^t}$ is the conjugate transpose. 
Depending on $\K$ this group corresponds to the orthogonal group $\OO(n)$, 
the unitary group $\U(n)$ or the symplectic group $\Sp(n)$.

Let $I\in \K^{n\times n}$ be the identity matrix.
The classical Cayley transform, $c_{I}\colon \Omega({I})\to \Omega({I})$  is defined by
$c_{I}(X) =(I-X)(I+X)^{-1}$
with $\Omega({I})=
\left\{ X \in \K^{n\times n}\mid (I+X)^{-1} \text{ exists}\right\}$.
This map satisfies the equality $c_{I}^2=\id$.  Moreover $c_I$  induces a diffeomorphism between the tangent space 
$T_{I}\G(n)=\{X\in \K^{n\times n}\mid X+X^*=0\}$  and 
$\Omega(I)\cap \G(n)$.
This construction was generalized by 
A.~G\'omez-Tato and the  first two authors \cite{GomMacPer2011} to any $A\in\G(n)$ as a map 
$c_{A}\colon \Omega(A)\to \Omega(A^*)$ defined by
\begin{equation}\label{eq:tato}
c_A(X)=(I-A^*X)(A+X)^{-1} =c_I(A^*X)A^*,
\end{equation}
with 
$$\Omega(A)=\left\{X\in\K^{n\times n}\mid (A+X)^{-1} \text{ exists}\right\}.$$
In this case we have $c_A^{-1}=c_{A^*}$ and there is a diffeomorphism between the tangent space
$T_A\G(n)=\left\{
X\in\K^{n\times n} \mid A^*X+X^*A=0\right\}$
and 
$\Omega(A^*)\cap \G(n)$.

In this work,   \emph{we construct Cayley transforms on Stiefel manifolds}. 
We first specify some conventions and notations in use in this paper and state our main results. 

Let $0\leq k\leq n$. 
The compact Stiefel manifold $ \OO_{n,k}$ of orthonormal $k$-frames in $\K^n$ is 
the set of matrices $x\in\K^{n\times k}$ such that $x^*x=I_k$. 
This manifold appears also  as the basis of the principal fibration
$$
\G(n-k)\stackrel{\iota}{\longrightarrow}
\G(n)\stackrel{\rho}{\longrightarrow}
\OO_{n,k},
$$
where $\iota(B)=\begin{pmatrix}
B&0\cr
0&I_{k}
\end{pmatrix}
$ and $\rho$ is the projection onto the last $k$ columns.
If $A\in\G(n)$ and $x=\rho(A)$, 
we denote by $\rho_{*A}\colon T_A\G(n)\to T_{x}\OO_{n,k}$ the map induced  
between the tangent spaces. 

\smallskip

The next statement contains the existence and the main properties of a Cayley transform in Stiefel manifolds.

\begin{theorem}\label{thm:propcayley}
Let $0\leq k\leq n$ and $x=\block{T}{P}\in \OO_{n,k}$ with $P\in\K^{k\times k}$. We choose $A=
\begin{pmatrix}
\alpha&T\cr
\beta&P
\end{pmatrix}\in\G(n)$.
Then there exists a map
$$\gamma^A\colon T_x\OO_{n,k}\to \OO_{n,k}$$
such that $\gamma^A\circ \rho_{*A}=\rho\circ c_A$. Moreover we have
 the following properties.
\begin{enumerate}[1)]
\item The map $\gamma^A$ is injective on the open subset
$$\Gamma^x=\left\{
v=A\block{X}{Y}\in T_x \OO_{n,k} \mid
(\beta X+P)^{-1} \text{ exists}\right\}. 
$$
This subset $\Gamma^x$ does not depend on the choice of $A$ such that $\rho(A)=x$. Furthermore, 
 if $\gamma^A$ is injective on an open subset
$U\subset T_x\OO_{n,k}$
then we have $U\subset \Gamma^x$.
\item The map $\gamma^A$ induces a diffeomorphism between $\Gamma^x\subset T_x \OO_{n,k}$ and the open subset
$$\Omega^x  
=\left\{
\block{\tau}{\pi}\in \OO_{n,k} \mid (\pi+P^*)^{-1} \text{ exists}
\right\}.$$
\end{enumerate}
\end{theorem}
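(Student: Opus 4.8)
The plan is to transport the group Cayley transform $c_A$ along the principal fibration $\rho\colon\G(n)\to\OO_{n,k}$. For $v=A\block{X}{Y}\in T_x\OO_{n,k}$ (so that $Y+Y^*=0$) I would first make the relation explicit by choosing the canonical lift of $v$ to $T_A\G(n)$, namely $W=AZ$ with $Z=\begin{pmatrix}0&X\cr-X^*&Y\end{pmatrix}$, the unique skew-Hermitian lift whose top-left $(n-k)\times(n-k)$ block vanishes. Since $A^*W=Z$ and $A+W=A(I+Z)$, the identity $c_A(W)=c_I(A^*W)A^*$ of \eqref{eq:tato} gives $c_A(W)=c_I(Z)A^*$, whence
\[
\gamma^A(v):=\rho(c_A(W))=c_I(Z)\,A^*\block{0}{I_k}=c_I(Z)\block{\beta^*}{P^*}=c_I(Z)\,\rho(A^*).
\]
As $Z$ is skew-Hermitian, $I+Z$ is always invertible and $c_I(Z)$ is unitary, so $\gamma^A$ is defined on all of $T_x\OO_{n,k}$ and lands in $\OO_{n,k}$ (a unitary matrix applied to the frame $\rho(A^*)$). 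By construction $\gamma^A(\rho_{*A}(W))=\rho(c_A(W))$ for this canonical lift, which is how I would establish the intertwining relation.

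Next I would record the dependence on the chosen $A$. Any other preimage of $x$ is $A'=A\iota(B)$ with $B\in\G(n-k)$; writing $v=A'\block{X'}{Y'}$ gives $X'=B^*X$, $Y'=Y$ and $Z'=\iota(B)^{-1}Z\iota(B)$. Since $c_I$ commutes with conjugation, $c_I(Z')=\iota(B)^{-1}c_I(Z)\iota(B)$, and as $\rho((A')^*)=\iota(B)^{-1}\rho(A^*)$ one gets $\gamma^{A'}=\iota(B^*)\gamma^A$. The matrix $\iota(B^*)$ fixes the bottom $k\times k$ block, and the block $\beta'=\beta B$ of $A'$ satisfies $\beta'X'+P=\beta X+P$; hence both the defining condition of $\Gamma^x$ and that of $\Omega^x$ are unchanged, proving that $\Gamma^x$ and $\Omega^x$ do not depend on the choice of $A$.

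The heart of the proof is a single block computation. Using $c_I(Z)\rho(A^*)=2(I+Z)^{-1}\rho(A^*)-\rho(A^*)$ and solving $(I+Z)\block{w_1}{w_2}=\rho(A^*)$ by elimination, I would obtain, for the bottom block $\pi$ of $\gamma^A(v)=\block{\tau}{\pi}$,
\[
\pi+P^*=2\,(I+X^*X+Y)^{-1}(\beta X+P)^*.
\]
Here $I+X^*X$ is positive definite and $Y$ skew-Hermitian, so $I+X^*X+Y$ is always invertible; thus $\pi+P^*$ is invertible if and only if $\beta X+P$ is, i.e. $\gamma^A(v)\in\Omega^x\iff v\in\Gamma^x$. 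The same elimination is reversible: on $\Omega^x$, where $w_2=\tfrac12(\pi+P^*)$ is invertible, I can recover $X=(\beta^*-w_1)w_2^{-1}$ and then $Y$ from the second equation, everything smooth in $\block{\tau}{\pi}$, checking $Y+Y^*=0$ and the invertibility of $\beta X+P$ from $\tau^*\tau+\pi^*\pi=I_k$. This yields a smooth inverse $\Omega^x\to\Gamma^x$, giving the diffeomorphism of part (2) and, in particular, the injectivity of $\gamma^A$ on $\Gamma^x$ in part (1).

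The remaining and, I expect, hardest point is maximality: if $\gamma^A$ is injective on an open set $U$, then $U\subset\Gamma^x$. Equivalently I must show that $\gamma^A$ is nowhere locally injective on the complement $\{\det(\beta X+P)=0\}$. Part (2) already gives that $\gamma^A$ is a local diffeomorphism on $\Gamma^x$, and the formula for $\pi+P^*$ shows its differential degenerates exactly where $\beta X+P$ becomes singular. To upgrade ``singular differential'' to ``locally non-injective'' I would examine the two branches of $\Gamma^x$ meeting along the hypersurface $\{\det(\beta X+P)=0\}$ and show that $\gamma^A$ folds them onto the same side of $\Omega^x$, producing pairs $v\ne v'$ arbitrarily close with $\gamma^A(v)=\gamma^A(v')$; concretely this means solving $c_I(Z')^{-1}c_I(Z)\in\mathrm{stab}(\rho(A^*))\cong\G(n-k)$ for $Z'$ near $Z$. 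Controlling this collision near the singular locus is the main obstacle, and it is the step where the rational, hence analytic, nature of the Cayley transform has to be exploited.
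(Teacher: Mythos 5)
Your construction of $\gamma^A$ (lift $v$ to the skew-Hermitian $Z$ with vanishing top-left block, apply $c_I$, right-multiply by $A^*$, project), the independence of $\Gamma^x$ and $\Omega^x$ from the choice of $A$, and all of part (2) follow essentially the paper's own route: your identity $\pi+P^*=2(I_k+X^*X+Y)^{-1}(\beta X+P)^*$ is the paper's equation (b), and your inverse formulas are exactly \eqref{equa:leX} and \eqref{equa:ygrecq}. Deriving injectivity on $\Gamma^x$ from the two-sided inverse is a legitimate, slightly cleaner variant of the paper's direct cancellation argument $(X_1-X_2)b_1(\beta X_1+P)^*=0$.

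The genuine gap is the maximality clause of part (1): you correctly single it out as the hard point, propose a ``folding'' argument along the hypersurface $\{\det(\beta X+P)=0\}$, and then explicitly leave it unresolved (``controlling this collision near the singular locus is the main obstacle''). A sketch that names its own missing step is not a proof; moreover your intermediate claim that ``the formula for $\pi+P^*$ shows the differential degenerates exactly where $\beta X+P$ becomes singular'' does not follow from that formula alone. The paper's route avoids analyzing collisions of the map itself: it computes the full differential $(\gamma^A)_{*v}$ in Section~\ref{sec:derivative} and proves that $\ker(\gamma^A)_{*v}\neq 0$ precisely when $\beta X+P$ is singular. The nontrivial direction is settled by an explicit kernel vector of the form $\block{0}{N}$: writing the singular value decomposition $b(\beta X+P)^*=\mu\begin{pmatrix}Z_1&0\cr 0&0\end{pmatrix}\nu^*$ with a zero block of positive size $r>0$, one picks a nonzero skew-Hermitian $N$ with $\mu^* N\mu\begin{pmatrix}Z_1&0\cr 0&0\end{pmatrix}=0$ and checks $(\gamma^A)_{*v}\block{0}{N}=0$. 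Combined with the assertion that injectivity of $\gamma^A$ on an open set forces injectivity of its differential there, this gives $U\subset\Gamma^x$. (Your instinct that degeneracy of the differential does not by itself yield local non-injectivity is sound --- compare $t\mapsto t^3$ --- and that same subtlety affects the converse implication the paper invokes; but the kernel computation is in any case the missing ingredient, and the vector $\block{0}{N}$ above is exactly the direction in which your proposed collision would have to be produced.) As written, your proposal does not establish the ``furthermore'' statement of part (1).
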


An explicit formula for $\gamma^A$ is given in \defref{def:cayley}. Also, the expression of the inverse map $({\gamma^A}_{\mid\Gamma_x})^{-1}$ appears in Equations \eqref{equa:leX} and \eqref{equa:ygrecq}. 

\smallskip

As we said before for the group $\G(n)$, the Cayley transform 
$c_A\colon T_A\G(n)\to \Omega(A^*)\cap \G(n)$
is a diffeomorphism. Therefore the Cayley open subset $\Omega(A^*)\cap \G(n)$ is  contractible.
This property cannot be extended as it stands in the case of a Stiefel manifold.
However the image of the injectivity domain of a Cayley transform in $\OO_{n,k}$ is contractible in 
$\OO_{n,k}$.

\begin{theorem}\label{thm:cayleycontraction}
For every $x\in \OO_{n,k}$ the open subset
$\Omega^x  $
is contractible in $\OO_{n,k}$.
\end{theorem}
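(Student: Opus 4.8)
The plan is to exploit the globally defined Cayley map $\gamma^A\colon T_x\OO_{n,k}\to\OO_{n,k}$ furnished by \thmref{thm:propcayley} and to push the straight-line contraction of the vector space $T_x\OO_{n,k}$ through it. Fix any $A\in\G(n)$ with $\rho(A)=x$, so that $\gamma^A$ is defined on all of $T_x\OO_{n,k}$ and takes its values in $\OO_{n,k}$. By part 2) of \thmref{thm:propcayley}, $\gamma^A$ restricts to a diffeomorphism of $\Gamma^x$ onto $\Omega^x$; I denote by $v\colon\Omega^x\to\Gamma^x\subset T_x\OO_{n,k}$ its smooth inverse, that is $v=({\gamma^A}_{\mid\Gamma^x})^{-1}$.

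First I would define
$$H\colon\Omega^x\times[0,1]\longrightarrow\OO_{n,k},\qquad H(y,t)=\gamma^A\big(t\,v(y)\big).$$
This is well defined and continuous (indeed smooth), because $v$ is smooth, the scaled element $t\,v(y)$ stays inside the vector space $T_x\OO_{n,k}$ on which $\gamma^A$ is everywhere defined, and $\gamma^A$ takes its values in $\OO_{n,k}$. Then I would read off the two endpoints: $H(y,1)=\gamma^A(v(y))=y$, so $H(\cdot,1)$ is the inclusion $\Omega^x\hookrightarrow\OO_{n,k}$; while $H(y,0)=\gamma^A(0)$ is a single point of $\OO_{n,k}$, independent of $y$. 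Thus $H$ is a homotopy, carried out inside $\OO_{n,k}$, from the inclusion to a constant map, which is exactly the assertion that $\Omega^x$ is contractible in $\OO_{n,k}$.

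The point deserving emphasis is why this argument yields contractibility only in the ambient manifold and not contractibility of $\Omega^x$ itself. Writing $v(y)=A\block{X}{Y}$, membership of $t\,v(y)$ in $\Gamma^x$ is governed by invertibility of $t\beta X+P$; this holds at $t=1$ by hypothesis but may fail for intermediate $t$, so the segment $\{t\,v(y):t\in[0,1]\}$ generally leaves $\Gamma^x$ and the intermediate images $\gamma^A(t\,v(y))$ leave $\Omega^x$ to travel through the remainder of $\OO_{n,k}$. This is precisely the phenomenon noted just before the statement, distinguishing the Stiefel case from the group case, where the analogous segment stays in the domain of injectivity and the Cayley image is genuinely contractible. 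I do not expect any serious obstacle beyond verifying these regularity and endpoint facts, all of which are immediate from \thmref{thm:propcayley}; one may optionally record that the basepoint $\gamma^A(0)$ lies in $\Omega^x$ exactly when $P$ is invertible, but this is not needed for the conclusion.
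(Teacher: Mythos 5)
Your proof is correct and is essentially the paper's own argument: since $s^A=c_A\circ({\gamma^A}_{\mid\Gamma^x})^{-1}$ and $\gamma^A=\rho\circ c_A$ on $T_x\OO_{n,k}\cong(T_A\G(n-k))^{\bot}$, your homotopy $H(y,t)=\gamma^A\bigl(t\,v(y)\bigr)$ coincides exactly with the paper's $H(y,t)=\rho(c_A(tc_{A^*}(s^A(y))))$, with the same endpoints $y$ and $\gamma^A(0)$. The only difference is presentational: the paper routes through the local section $s^A$ of \propref{prop:localsection} and the group-level Cayley transforms, whereas you phrase everything directly in terms of the globally defined $\gamma^A$ and its restricted inverse.
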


This property is a consequence of the existence of a local section 
(see \propref{prop:localsection})
$s^A\colon \Omega^x\to \G(n)$
of the projection $\rho\colon \G(n)\to \OO_{n,k}$
and the contractibility of the Cayley open subsets $\Omega(A^*)\cap \G(n)$.

\smallskip

The contents of the paper are as follows. \secref{sec:CayleyStiefel} contains the construction of the Cayley transform
$\gamma^A\colon T_x\OO_{n,k}\to \OO_{n,k}$.
The study of the injectivity of its derivative is done in \secref{sec:derivative} and the proofs of 
Theorems \ref{thm:propcayley} and \ref{thm:cayleycontraction}
occupy \secref{subsec:propcayley}.
Finally, \secref{sec:appli}
is devoted to applications of this construction to  Lusternik-Schnirelmann category
of the quaternionic Stiefel manifolds and to optimisation problems on real Stiefel manifolds. 


\section{Construction}\label{sec:CayleyStiefel}
Let $\K^n$ be either the real vector space $\R^n$, the complex vector space $\C^n$ or the quaternionic vector space $\HH^n$   (with the structure of a right $\HH$-vector space) endowed with the inner product $\Hprod{u,v}=u^*v$. 
Let $0\leq k\leq n$. 
The compact Stiefel manifold $ \OO_{n,k}$ of orthonormal $k$-frames in $\K^n$ is 
the set of matrices $x\in\K^{n\times k}$ such that $x^*x=I_k$. 
It is standard to denote $\OO_{n,k}$ by  $\VV_{n,k}$ in the real case, 
 $\WW_{n,k}$ in the complex case and $\XX_{n,k}$ in the quaternionic case.

Usually we write $x=\block{T}{P}\in \OO_{n,k}$, with $T\in\K^{(n-k)\times k}$ and
$P\in \K^{k\times k}$. 
The linear left action of $\G(n)$ on $\OO_{n,k}$ is transitive and the isotropy group of 
$x_{0}=\block{0}{I_k}$
is isomorphic to $\G(n-k)$.
Therefore $\OO_{n,k}$ is diffeomorphic to $\G(n)/\G(n-k)$ and we have the principal fibration 
$\G(n-k)\xrightarrow[]{\iota} \G(n)\xrightarrow[]{\rho} \OO_{n,k}$.

Let  $x\in \OO_{n,k}$. We complete $x$ to a matrix $A\in\G(n)$  such that $\rho(A)=x$.
The tangent space $T_I\G(n)$ of the group $\G(n)$ at the identity is the set of 
skew-Hermitian (skew-symmetric in the real case) matrices and 
 the tangent space $T_A \G(n)$ at $A$ equals $A\cdot T_I\G(n)$.
On the other side, recall that
$$T_x\OO_{n,k}=\{v\in\K^{n\times k}\mid  v^*x+x^*v=0\}=A\cdot T_{x_0}\OO_{n,k},$$
where $x_0=\block{0}{I_k}=\rho(I_n)$.
So each tangent vector $v\in T_x\OO_{n,k}$ can be written as 
$v=A\block{X}{Y}$, with  $X\in\K^{(n-k)\times k}$,   
$Y\in \K^{k\times k}$ and $Y+Y^*=0$.
Grants to the principal fibration defining the Stiefel manifold,   the tangent space
$T_x \OO_{n,k}$ can be identified to the orthogonal   $\left(T_A\G(n-k)\right)^{\bot}$ of the image of the inclusion
of $\G(n-k)$ in $\G(n)$,
$$(T_A\G(n-k))^\perp
=
\left\{
A\begin{pmatrix}0&X\cr
-X^*&Y
\end{pmatrix}
\mid Y+Y^*=0\right\}
\cong 
T_x\OO_{n,k}.
$$
 With this identification, the tangent space 
 $T_x\OO_{n,k}=\left\{A\block{X}{Y}\mid Y+Y^*=0\right\}$
  is considered as a subspace of $T_A\G(n)$ and we may apply the Cayley map $c_A$ of $\G(n)$ on it. 
From Equation \eqref{eq:tato} we have
\begin{equation}\label{equa:cA}
c_A=R_{A^*}\circ c_I\circ L_{A^*},
\end{equation}
 where $L_{A^*}$ and $R_{A^*}$ denote as usual the  left and right multiplications in a Lie group.
 Thus, we have first to determine $c_I$ on the elements of 
 $\left(T_I\G(n-k)\right)^{\bot}$.

 \begin{lemma}\label{lemma:binverse} 
 Let $X\in \K^{(n-k)\times k}$ and $Y\in\K^{k\times k}$ such that $Y+Y^*=0$. Then the matrix $I_k+X^*X+Y$ is invertible.
 \end{lemma}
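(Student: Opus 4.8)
The plan is to prove that the kernel of $M:=I_k+X^*X+Y$ is trivial, which for a square matrix over $\R$, $\C$ or $\HH$ forces invertibility (in the quaternionic case one uses that an injective endomorphism of the finite-dimensional right module $\HH^k$ is bijective). The structural point I would exploit is that $M$ is the sum of the Hermitian matrix $H=I_k+X^*X$ and the matrix $Y$, which the hypothesis $Y+Y^*=0$ makes skew-Hermitian; rather than inverting $M$ directly, I would control it through its quadratic form.

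First I would check that $H$ is positive definite: for $v\in\K^{k\times 1}$,
\begin{equation*}
v^*Hv=\Hprod{v,v}+\Hprod{Xv,Xv}\geq\Hprod{v,v}>0\qquad\text{whenever } v\neq 0.
\end{equation*}
Next, from $Y+Y^*=0$ one gets $(v^*Yv)^*=v^*Y^*v=-v^*Yv$, so $v^*Yv$ has vanishing real part (it is $0$ in the real case and purely imaginary in the complex case). Consequently the real part of $v^*Mv$ equals $v^*Hv$, which is strictly positive for $v\neq 0$.

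The conclusion then follows at once: if $Mv=0$ then $v^*Mv=0$, so its real part $v^*Hv$ vanishes and hence $v=0$. I expect the only delicate point to be the quaternionic case, where one must note that $v^*Hv$ is genuinely real (because $H^*=H$ gives $(v^*Hv)^*=v^*Hv$) and that a quaternion with strictly positive real part cannot be zero; both are clear once the real-part decomposition above is set up, so no serious obstacle remains.
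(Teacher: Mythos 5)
Your proof is correct, but it takes a different route from the paper's. The paper embeds the problem in $\G(n)$: it forms the $n\times n$ skew-Hermitian matrix $M=\begin{pmatrix}0&X\cr -X^*&Y\end{pmatrix}$, invokes the invertibility of $I_n+M$, and then extracts the invertibility of $I_k+X^*X+Y$ as the Schur-complement block of the factorization $\begin{pmatrix}I_{n-k}&0\cr X^*&I_k\end{pmatrix}(I_n+M)=\begin{pmatrix}I_{n-k}&X\cr 0&X^*X+I_k+Y\end{pmatrix}$. You instead argue directly on the $k\times k$ matrix via its quadratic form: $\mathrm{Re}\,(v^*(I_k+X^*X+Y)v)=v^*v+v^*X^*Xv>0$ for $v\neq 0$, since $v^*Yv$ is conjugate-antisymmetric and hence has zero real part, so the kernel is trivial. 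Your argument is the more elementary and self-contained of the two --- indeed, the fact the paper quotes (that $I_n+M$ is invertible for skew-Hermitian $M$, phrased there via the absence of real eigenvalues, which requires a little care over $\HH$) is itself most cleanly proved by exactly your real-part computation, so you have in effect inlined that step at the level of the small matrix. What the paper's block factorization buys in exchange is reusable machinery: the same identity \eqref{eq:product} is inverted in the proof of Proposition \ref{prop:cIonbot} to obtain the explicit formula for $(I_n+M)^{-1}$ and hence for $c_I(M)$, whereas your argument establishes invertibility without producing the inverse. You are also right to flag, and correctly handle, the two quaternionic subtleties (realness of $v^*Hv$ and injectivity implying bijectivity for right-$\HH$-linear endomorphisms of $\HH^k$).
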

 
 \begin{proof}The skew-symmetric matrix $M=\begin{pmatrix}
 0&X\cr
 -X^*&Y
 \end{pmatrix}$ cannot have real eigenvalues, then $I_n+M=\begin{pmatrix}
I_{n-k}&X\cr
-X^*&I_k+Y
\end{pmatrix}
$ is invertible. In the following product,
\begin{equation}\label{eq:product}
\begin{pmatrix}
I_{n-k}&0\cr
X^*&I_k
\end{pmatrix}
\cdot
(I_n+M)=
\begin{pmatrix}
I_{n-k}&X\cr
0&X^*X+I_{k}+Y
\end{pmatrix},
\end{equation}
the two factors on the left-hand side
admit an inverse. So, the matrix on the right-hand side admits an inverse, and the results follows.
 \end{proof}
 We  denote
 \begin{equation}\label{eq:b}
 b=(I_k+X^*X+Y)^{-1}.
 \end{equation}

 \begin{proposition}\label{prop:cIonbot}
Let
 $M=\begin{pmatrix}
 0&X\cr
 -X^*&Y
 \end{pmatrix}
 \in T_I\G(n)$,
 then we have
\begin{equation}\label{equa:cIonbot}
c_I(M)
=
\begin{pmatrix}
I_{n-k}-2XbX^*&-2Xb\cr
2bX^*&-I_{k}+2b
\end{pmatrix}.
\end{equation}
 \end{proposition}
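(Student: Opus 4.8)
The plan is to evaluate the Cayley transform directly from its definition $c_I(M)=(I_n-M)(I_n+M)^{-1}$, in block form, using the factorisation of $I_n+M$ already obtained in \lemref{lemma:binverse} to produce an explicit inverse. Writing out the two relevant matrices
$$I_n+M=\block{I_{n-k} & X}{-X^* & I_k+Y},\qquad I_n-M=\block{I_{n-k} & -X}{X^* & I_k-Y},$$
reduces the whole problem to one block inversion followed by one block multiplication, so no genuinely new idea is needed beyond bookkeeping.

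First I would extract $(I_n+M)^{-1}$ from Equation \eqref{eq:product}. That identity has the shape $L\,(I_n+M)=U$, with $L=\block{I_{n-k} & 0}{X^* & I_k}$ and $U=\block{I_{n-k} & X}{0 & b^{-1}}$, where $b^{-1}=I_k+X^*X+Y$ as in \eqref{eq:b}. Hence $(I_n+M)^{-1}=U^{-1}L$, and since $U^{-1}=\block{I_{n-k} & -Xb}{0 & b}$ I obtain
$$(I_n+M)^{-1}=U^{-1}L=\block{I_{n-k}-XbX^* & -Xb}{bX^* & b}.$$
Multiplying $I_n-M$ by this matrix and reading off the four blocks, the two upper blocks collapse at once to $I_{n-k}-2XbX^*$ and $-2Xb$, matching \eqref{equa:cIonbot}.

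The only step I would treat as the crux is simplifying the two lower blocks, which emerge in the unreduced forms $X^*+bX^*-(X^*X+Y)bX^*$ and $b-(X^*X+Y)b$. Here I would substitute $X^*X+Y=b^{-1}-I_k$, read off directly from the definition \eqref{eq:b} of $b$; this turns $(X^*X+Y)b$ into $I_k-b$, giving $2bX^*$ and $-I_k+2b$ respectively, as claimed. Throughout, the main thing to watch is that $\K$ may be noncommutative in the quaternionic case, so no factor may be commuted past another; however every manipulation above uses only associativity and the two-sided relation $b\,b^{-1}=b^{-1}b=I_k$, both of which respect the ordering of factors, so the computation is valid uniformly over $\R$, $\C$ and $\HH$.
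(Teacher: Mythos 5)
Your proof is correct and follows essentially the same route as the paper: both extract $(I_n+M)^{-1}=U^{-1}L$ from the factorisation in Equation \eqref{eq:product} and then multiply by $I_n-M$ blockwise, using $X^*X+Y=b^{-1}-I_k$ to simplify. The only cosmetic difference is that you multiply out $U^{-1}L$ before applying $I_n-M$, while the paper keeps $(I_n+M)^{-1}$ in factored form.
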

 
 \begin{proof}From Equation \eqref{eq:product} we have
 $$(I_n+M)^{-1}=\begin{pmatrix}
I_{n-k}&X\cr
0&b^{-1}
\end{pmatrix}^{-1} 
\begin{pmatrix}
I_{n-k}&0\cr
X^*&I_k
\end{pmatrix}=
\begin{pmatrix}
I_{n-k}&-Xb\cr
0&b
\end{pmatrix}
\begin{pmatrix}
I_{n-k}&0\cr
X^*&I_k\cr
\end{pmatrix}.$$
 By applying the definition of $c_I\colon T_I\G(n)\to \G(n)$, we get:
\begin{align*}
c_{I}
(M)&=
\left(I_{n}-
M
\right)
\left(I_{n}+
M
\right)^{-1}\\
&=
\begin{pmatrix}
I_{n-k}&-X\cr
X^*&I_{k}-Y
\end{pmatrix}
(I_n+M)^{-1} 
=
\begin{pmatrix}
I_{n-k}-2XbX^*&-2Xb\cr
2bX^*&-I_{k}+2b
\end{pmatrix}.\qedhere
\end{align*}
\end{proof}
 
 A computation from \eqref{equa:cA} and \eqref{equa:cIonbot} gives directly:

\begin{equation*}
c_A\left(A
M
\right)
=
\begin{pmatrix}
(I_{n-k}-2XbX^*)\alpha^*-2XbT^*
&
(I_{n-k}-2XbX^*)\beta^*-2XbP^*\cr
2bX^*\alpha^*+(-I_{k}+2b)T^*
&
2bX^*\beta^*+(-I_{k}+2b)P^*
\end{pmatrix}.
\end{equation*}

\medskip
The Cayley transform is now obtained by projecting this expression on $\OO_{n,k}$.

\begin{definition}\label{def:cayley}
Let $x=\block{T}{P}\in \OO_{n,k}$.
We choose
$A=
\begin{pmatrix}
\alpha&T\cr
\beta&P
\end{pmatrix}\in \G(n)$, 
and consider
$v=A\block{X}{Y}
\in T_x\OO_{n,k}$.
\emph{The Cayley transform on the Stiefel manifold,}
$\gamma^A\colon T_x\OO_{n,k}\to \OO_{n,k}$,
is defined by
\begin{align}
\gamma^A(v)=&
\begin{pmatrix}
(I_{n-k}-2XbX^*)\beta^*-2XbP^*\cr
2bX^*\beta^*+(-I_{k}+2b)P^*
\end{pmatrix}
=2\block{-Xb}{b}(\beta X+P)^*+\block{\beta^*}{\,-P^*},\label{equa:gammaA}
\end{align}
where $b$ is given in Equation \eqref{eq:b}.
\end{definition}

\begin{remark}\label{rem:choice}
The map $\gamma^A$ depends on the choice of $A$ such that $\rho(A)=x$. 
With the previous notation, the elements of $\G(n)$ that are sent on $x$ are the matrices
$A{\scriptstyle{\bullet}} E:=A \begin{pmatrix}
E&0\cr
0&I_k
\end{pmatrix}
=
\begin{pmatrix}
\alpha E&T\cr
\beta E& P
\end{pmatrix}$
with $E\in \G(n-k)$. 
We observe 
$v=
A\block{X}{Y}
=
(A{\scriptstyle{\bullet}} E )
\block{E^*X}{Y}$
and
$I_k+(E^*X)^*(E^*X)+Y=I_k+X^*X+Y$. 
Thus, in \eqref{equa:gammaA}, if we replace $X$ by $E^*X$,
$\beta$ by $\beta E$ and keep unchanged $b$ and $Y$, we get 
\begin{equation*}
\gamma^{A{\scriptstyle{\bullet}} E}(v)= 
\begin{pmatrix}E^*&0\cr
0&I_k\cr\end{pmatrix} \gamma^A(v).
\end{equation*}
\end{remark}

\medskip

We end this section by noticing that the behavior of $\gamma^A$ 
is different from that of   the Cayley transform
$c_A$  in $\G(n)$. For instance,  when $n-k\geq k$, if we choose $x=\block{T}{0}$ and 
$v= A
\block{0}{Y}$,  we have
$\gamma^A(v)=\block{\beta^*}{0}$, which does not depend on $Y$. Thus $\gamma^A$ is not injective on the tangent space $T_x \OO_{n,k}$.
We address the determination of a domain of injectivity for $\gamma^A$ in \secref{subsec:propcayley} but, before that,
we study the differential of $\gamma^A$. 

\section{Differential}\label{sec:derivative}
The results of this section are used in the study of the  domain of injectivity of the Cayley transform $\gamma^A$.

Let  $x=\block{T}{P}\in \OO_{n,k}$ and
$A=
\begin{pmatrix}
\alpha&T\cr
\beta&P
\end{pmatrix}\in\G(n)$,
as before.
Let $v_0=\block{X}{Y}\in T_{x_0}\OO_{n,k}$. The differential of $\gamma^A$ as a map $\gamma^A\colon T_x\OO_{n,k}\to \K^{n\times k}$
at the point $v=A
v_0\in T_x\OO_{n,k}$
is denoted
$$(\gamma^A)_{*v}\colon T_vT_x\OO_{n,k}\cong T_x\OO_{n,k}\to \K^{n\times k}.
$$
We compute
$(\gamma^A)_{*v}(w)$ for any 
$w=A
\block{M}{N}
\in T_x\OO_{n,k}$, that is, $N+N^*=0$.
Since, with the identification $T_x\OO_{n,k}\cong (T_A\G(n-k))^\perp$, we have
 \begin{equation}\label{eq:gamma}
 \gamma^A=\rho\circ R_{A^*}\circ c_I\circ L_{A^*},
 \end{equation} 
 the differential of $\gamma^A$ is determined by 
that of $c_I$.
Therefore, we first consider 
$$(c_I)_{*v_0} 
\block{M}{N} 
=
\frac{d}{dt}_{\left|{t=0}\right.}
c_I\left(\block{X}{Y} + t \block{\,M}{\,N} \right).
$$
Let
$$b_t^{-1}:=I_k+(X+tM)^*(X+tM)+Y+tN.$$
Its derivative  $\frac{d}{dt}_{\left|{t=0}\right.}b_t^{-1}$ is denoted $\xi$ and equals 
\begin{equation}\label{eq:xi}
\xi:=X^*M+M^*X+N.
\end{equation} 
Moreover $b_0^{-1}=b^{-1}$. From $b_t b_t^{-1}=I$, we deduce $b^\prime_0=-b\xi b$. Then, a direct computation from Equation \eqref{equa:cIonbot} gives
\begin{equation}\label{equa:ciderivative}
(c_I)_{*v_0} 
\block{M}{N}
=
\left(
\begin{array}{rr}
-2MbX^*+2Xb\xi bX^*-2XbM^*&\ \ 
-2Mb+2Xb\xi b\\
-2b\xi bX^*+2bM^*&
-2b\xi b
\end{array}\right).
\end{equation}

\begin{proposition}\label{prop:derivative}
With the previous notations, the differential of the Cayley transform of the Stiefel manifold $\OO_{n,k}$ is given by
\begin{equation}\label{equa:derivative}
(\gamma^A)_{*v}(w)=
\block
{(-2MbX^*+2Xb\xi bX^*-2XbM^*)\beta^*+
(-2Mb+2Xb\xi b)P^*}
{(-2b\xi bX^*+2bM^*)\beta^*-2b\xi bP^*}.
\end{equation}
\end{proposition}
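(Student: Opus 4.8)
The plan is to differentiate the factorization $\gamma^A = \rho \circ R_{A^*} \circ c_I \circ L_{A^*}$ recorded in Equation \eqref{eq:gamma} by the chain rule, reading off the only non-trivial term from the differential $(c_I)_{*v_0}$ already computed in \eqref{equa:ciderivative}. The decisive simplification is that three of the four composing maps are \emph{linear} in the matrix variable: left multiplication $L_{A^*}$, right multiplication $R_{A^*}$ and the column projection $\rho$ each coincide with their own differential. Hence only $c_I$ contributes a genuine derivative, and the chain rule reduces to sandwiching \eqref{equa:ciderivative} between these constant linear factors.

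First I would fix the point at which $c_I$ is differentiated and the increment fed into it. Since $A\in\G(n)$ satisfies $A^*A=I_n$, the map $L_{A^*}$ carries $v=Av_0$ to $v_0=\block{X}{Y}$ and, being linear, sends the tangent vector $w=A\block{M}{N}$ to $\block{M}{N}$. Thus $c_I$ is differentiated at $v_0$ with increment $\block{M}{N}$, and the middle of the chain produces exactly the $n\times n$ matrix $(c_I)_{*v_0}\block{M}{N}$ of \eqref{equa:ciderivative}.

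Next I would push this matrix through the two remaining linear maps. Right multiplication by $A^*=\begin{pmatrix}\alpha^*&\beta^*\cr T^*&P^*\end{pmatrix}$ multiplies \eqref{equa:ciderivative} on the right by $A^*$, and $\rho$ then retains only the last $k$ columns. As the last $k$ columns of $A^*$ form the block column $\begin{pmatrix}\beta^*\cr P^*\end{pmatrix}$, the projected product equals $(c_I)_{*v_0}\block{M}{N}$ multiplied by $\begin{pmatrix}\beta^*\cr P^*\end{pmatrix}$. Performing this single block multiplication, the two block rows of \eqref{equa:ciderivative} pair with $\beta^*$ and $P^*$ to give, respectively, the two rows of \eqref{equa:derivative}.

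I expect no genuine obstacle here: once \eqref{equa:ciderivative} is in hand, this is a mechanical chain-rule computation. The only step deserving care is the $2\times2$ block bookkeeping at the end, where one must verify that projecting onto the last $k$ columns pairs each left-hand block of \eqref{equa:ciderivative} with $\beta^*$ and each right-hand block with $P^*$, so that the first $n-k$ columns of $A^*$, namely $\begin{pmatrix}\alpha^*\cr T^*\end{pmatrix}$, drop out and no $\alpha^*$ or $T^*$ terms survive in \eqref{equa:derivative}.
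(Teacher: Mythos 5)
Your proposal is correct and follows essentially the same route as the paper: the authors likewise invoke the chain rule on the factorization $\gamma^A=\rho\circ R_{A^*}\circ c_I\circ L_{A^*}$, observe that $\rho$, $R_{A^*}$ and $L_{A^*}$ are linear so that $(\gamma^A)_{*v}(w)=\rho\bigl((c_I)_{*v_0}(w_0)\cdot A^*\bigr)$, and then read off \eqref{equa:derivative} from \eqref{equa:ciderivative}. Your explicit check that the last $k$ columns of $A^*$ are $\begin{pmatrix}\beta^*\cr P^*\end{pmatrix}$, so that $\alpha^*$ and $T^*$ drop out, is exactly the block bookkeeping the paper leaves implicit.
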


\begin{proof}
The equality 
\eqref{eq:gamma} gives by the chain rule
$(\gamma^A)_{*v}(w)=\rho\left( (c_I)_{*v_0} (w_0)\cdot A^*\right)$, where 
$w_0=\block{M}{N}$ and 
$A^*=\begin{pmatrix}\alpha^*&\beta^*\cr
T^*&P^*\cr\end{pmatrix}$, because the projection $\rho$ and the translations  $R_{A^*}$ and $L_{A^*}$ are linear maps. Then
 formula \eqref{equa:ciderivative} gives the value (\ref{equa:derivative}).
 \end{proof}

\begin{proposition}The differential $(\gamma^A)_{*v}$ is injective if and only if the matrix $\beta X+P$ is invertible.
\end{proposition}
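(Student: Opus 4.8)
The plan is to read the kernel of the linear map $(\gamma^A)_{*v}$ directly off formula \eqref{equa:derivative} and to show it is trivial exactly when $Q:=\beta X+P$ is invertible. Write $w=A\block{M}{N}$ with $N+N^*=0$ and set $\xi=X^*M+M^*X+N$ as in \eqref{eq:xi}. Using $Q^*=X^*\beta^*+P^*$ to factor the two blocks of \eqref{equa:derivative}, the lower block becomes $2b\,(M^*\beta^*-\xi bQ^*)$ and the upper block becomes $-2MbQ^*+2Xb\,(\xi bQ^*-M^*\beta^*)$. Since $b$ is invertible by \lemref{lemma:binverse}, the lower block vanishes if and only if $M^*\beta^*=\xi bQ^*$, and, granting this, the upper block collapses to $-2MbQ^*$. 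Hence $w\in\ker (\gamma^A)_{*v}$ if and only if
\begin{equation*}
MbQ^*=0\qquad\text{and}\qquad M^*\beta^*=\xi bQ^*.
\end{equation*}

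For the ``if'' direction, suppose $Q$ is invertible. Then $bQ^*$ is invertible, so the first equation forces $M=0$; with $M=0$ one has $\xi=N$, so the second equation reads $NbQ^*=0$, whence $N=0$. Thus the kernel is trivial and the differential is injective.

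For the converse I would exhibit a nonzero kernel vector whenever $Q$ is singular. The natural attempt is $M=0$, which satisfies the first equation automatically and reduces the second to finding a nonzero skew-Hermitian $N$ with $NbQ^*=0$; this says the column space of $bQ^*$ must lie in $\ker N=(\text{image }N)^\perp$ (recall $N$ is skew-Hermitian, hence normal), a subspace whose dimension equals the corank of $Q$. Over $\C$ and $\HH$ one picks a unit vector $s$ orthogonal to that column space and a pure-imaginary scalar $q$, and sets $N=s\,q\,s^*$; this is nonzero, skew-Hermitian, and kills $bQ^*$, giving $w=A\block{0}{N}$. Over $\R$ the same works whenever the corank is at least two, using $N=s_1s_2^*-s_2s_1^*$.

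The main obstacle is the real case in corank one, where no nonzero skew-symmetric matrix has image inside a one-dimensional space, so $M=0$ fails and one must take $M\neq0$. Here the first equation pins $M$ to the rank-one form $M=c\,\omega^*b^{-1}$ with $Q\omega=0$, and, substituting into $NbQ^*=M^*\beta^*-(X^*M+M^*X)bQ^*$, the relation $\omega^*Q^*=(Q\omega)^*=0$ kills two terms and leaves $NbQ^*=(b^*)^{-1}\omega\,c^*(\beta^*-XbQ^*)$. Writing the skew-Hermitian ansatz $N=(b^*)^{-1}\omega s^*-s\omega^* b^{-1}$ reduces everything to the single linear equation $Qb^*s=\beta c-Qb^*X^*c$ for $s$, which is solvable precisely when $\beta c$ lies in the image of $Q$, i.e.\ when $c$ is orthogonal to $\beta^* u$ for the vector $u$ with $Q^*u=0$; choosing such a $c\neq0$ then yields a nonzero kernel vector. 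I expect the bookkeeping of this rank-one construction, and the verification that a suitable $c$ exists, to be the delicate part, whereas the complex and quaternionic cases of primary interest here follow at once from the $M=0$ construction above.
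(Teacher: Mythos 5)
Your reduction of the kernel of $(\gamma^A)_{*v}$ to the pair of equations $MbQ^*=0$ and $M^*\beta^*=\xi bQ^*$, with $Q=\beta X+P$, is exactly the system (iii)--(iv) that the paper derives from \eqref{equa:derivative}, and your argument that invertibility of $Q$ forces $M=0$ and then $\xi=N=0$ coincides with the paper's. For the converse the paper also restricts to kernel vectors with $M=0$, reducing to the existence of a nonzero skew-Hermitian $N$ with $NbQ^*=0$; it does this via a singular value decomposition $bQ^*=\mu\,\diag(Z_1,Z_2)\,\nu^*$ with $Z_2=0\in\K^{r\times r}$, $r=\mathrm{corank}(Q)>0$, and then asserts that a nonzero skew-Hermitian $\mu^*N\mu$ supported on the last $r$ coordinates exists. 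Your explicit choices $N=sqs^*$ (for $\K=\C,\HH$) and $N=s_1s_2^*-s_2s_1^*$ (for $\K=\R$, corank at least $2$) are the same construction with the SVD stripped out, and they are correct.

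The case you flag as delicate --- $\K=\R$ with $\mathrm{corank}(Q)=1$ --- is not merely delicate: it is precisely where the paper's own argument breaks (there is no nonzero $1\times 1$ real skew-symmetric block), and in fact the statement is false there, so your rank-one-$M$ repair cannot be completed in general. Take $\K=\R$, $n=2$, $k=1$, $A=\begin{pmatrix}0&1\cr 1&0\end{pmatrix}$, $x=\block{1}{0}$, and $v=0$, so that $X=0$ and $Q=\beta X+P=0$ is singular. Here $Y$ and $N$ are forced to vanish, $\gamma^A$ is the rational parametrization $X\mapsto\block{(1-X^2)/(1+X^2)}{2X/(1+X^2)}$ of the circle $\VV_{2,1}$, and its differential at $X=0$ sends $M$ to $\block{0}{2M}$, which is injective. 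This is consistent with your own analysis: the solvability condition you derive (a nonzero $c$ with $c\perp\beta^*u$) fails here because $n-k=1$ and $\beta^*u\neq 0$, and when it fails there genuinely is no kernel vector. So your proof is complete, and agrees with the paper's, over $\C$ and $\HH$ and over $\R$ when the corank is not one; the residual real corank-one case is a counterexample to the proposition (and to the paper's proof), not a gap attributable to your argument.
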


\begin{proof}According to \eqref{equa:derivative}, the kernel of $(\gamma^A)_{*v}$ is the space of solutions $\block{M}{N}$ of the system
$$\left\{
\begin{array}{lrcl}
\text{(i)}& 2Mb(\beta X+P)^*
&=&
2Xb\xi b(\beta X+P)^*-2XbM^*\beta^*,\\
\text{(ii)}&
\xi b(\beta X+P)^*
&=&
M^*\beta^*,
\end{array}
\right.
$$
where we have used that the matrix $b$ is invertible.
Then we get
$$
Mb(X^*\beta^*+P^*)=0,
$$
so the first system is equivalent to
$$\left\{
\begin{array}{lrcl}
\text{(iii)}& 
Mb(\beta X+P)^*
&=&
0,\\
\text{(iv)}&\xi b(\beta X+P)^*
&=&
(\beta M)^*.\\
\end{array}
\right.
$$
$\bullet$ If we suppose the matrix $\beta X+P$ invertible, 
then the equation (iii) gives $M=0$ and the equation (iv) gives $\xi=0$. 
Finally, from the definition of $\xi$ in Equation \eqref{eq:xi} we have $M=N=0$.

\medskip\noindent
$\bullet$
Conversely, we suppose  the kernel of $\beta X+P$  not reduced to 0 and
we look for an element in the kernel of $(\gamma^A)_{*v}$ of the particular type $M=0$.
In this case, the equation (iii) is trivially satisfied and the equation (iv) may be reduced to
$Nb(\beta X+P)^*=0$.
We consider the singular value decomposition 
 of $b(\beta X+P)^*\in\K^{k\times k}$
 (for the quaternionic case see \cite{ZHANG}):
$$b(\beta X+P)^*=\mu
\begin{pmatrix}
Z_1&0
\cr
0&Z_2
\end{pmatrix}
\nu^*, \quad \mu,\nu\in\G(k),
$$
where $Z_1$ is diagonal without zero value on it and $Z_2=0\in \K^{r\times r}$.
As $\beta X+P$ is not invertible, we have  $r>0$. The existence of solutions in 
the equation $Nb(\beta X+P)^*=0$ is then equivalent to the existence of solutions in
$$N\mu \begin{pmatrix}
Z_1&0
\cr
0&Z_2
\end{pmatrix}=0
\iff
\mu^*N\mu \begin{pmatrix}
Z_1&0
\cr
0&Z_2
\end{pmatrix}=0.
$$
The fact that $r>0$ allows the choice of a non-zero, skew-symmetric matrix $\mu^*N\mu$ satisfying the last equation.
Thus $N\neq 0$ is skew-symmetric and
$(\gamma^A)_{*v}
\block{0}{N}=0$.
\end{proof}

\section{Properties}\label{subsec:propcayley}
This section consists of  the proof of Theorems \ref{thm:propcayley} and \ref{thm:cayleycontraction}. 
Recall the notations $x=\block{T}{P}\in \OO_{n,k}$
with $T\in\K^{(n-k)\times k}$,
$P\in \K^{k\times k}$
and the choice of $A=
\begin{pmatrix}
\alpha&T\cr
\beta&P
\end{pmatrix}\in\G(n)$.

\begin{proof}[Proof of \thmref{thm:propcayley}]\ 

\indent 1) First, we look at the independence of $\Gamma^x$ on the choice of $A$. 
With the notations of Remark \ref{rem:choice}, any matrix projecting on $x$ can be written as
$A{\scriptstyle{\bullet}} E=\begin{pmatrix}
\alpha E& T\cr
\beta E&P
\end{pmatrix}$
with  $E\in \G(n-k)$. An element
$v\in T_x \OO_{n,k}$ may be expressed as
$v=
A\block{X}{Y}=
\left(A{\scriptstyle{\bullet}} E\right)\block{E^*X}{Y}.$
The fact that $\Gamma^x$ does not depend
 on the choice of $A$ comes from $(\beta E)(E^*X)+P=\beta X+P$.

As for the injectivity, let 
$v_1=
A\block{X_1}{Y_1}$
and
$v_2=
A\block{X_2}{Y_2}
$
be two vectors of $\Gamma^x$. From Definition \ref{def:cayley}, the equality
$\gamma^A(v_1)=\gamma^A(v_2)$
is equivalent to the system
$$
\left\{\begin{array}{lrcl}
\text{(i)}& X_1b_1(\beta X_1+P)^*
&=&
X_2b_2(\beta X_2+P)^*,\\
\text{(ii)}&
b_1(\beta X_1+P)^*
&=&
b_2(\beta X_2+P)^*,

\end{array}
\right.
$$
from which we deduce
\begin{equation}\label{equa:X1X2}
(X_1-X_2)b_1(\beta X_1+P)^*=0.
\end{equation}
As $v_1\in\Gamma^x$ means that $\beta X_1+P$ is invertible, we get from \eqref{equa:X1X2} the equality $X_1=X_2$. 
Then the equation (ii) implies $b_1=b_2$, 
from which and \eqref{eq:b} we deduce  that $Y_1=Y_2$ and the injectivity of $\gamma^A$ on $\Gamma^x$.

Conversely, suppose $\gamma^A$ invertible on an open subset $U$. This implies the
injectivity of the differential
$(\gamma^A)_{*v}$ for any $v\in U$ and \propref{prop:derivative}
 gives the inclusion $U\subset \Gamma^x$.

\medskip

2) The values of $\tau$ and $\pi$ such that $\gamma^A(v)=\block{\tau}{\pi}$ are given by (\ref{equa:gammaA}). 
Let $\block{\tau}{\pi}$ such that $\pi+P^*$ is invertible. We are looking for matrices 
$X\in\K^{(n-k)\times k}$ and $Y\in\K^{k\times k}$, with $Y$ skew-symmetric, such that
$\beta X+P$ is invertible and the following system, which is equivalent to Definition \ref{def:cayley}, is satisfied:
$$\begin{array}{ll}
&\left\{
\begin{array}{lrcl}
\text{(a)}&\tau -\beta^*
&=&
-2Xb(\beta X+P)^*,
\\
\text{(b)}&\pi+P^*
&=&
2b(\beta X+P)^*.
\end{array}\right.\\
\end{array}
$$
In particular, we have from (b) that 
 the matrix $\pi+P^*$ is invertible if and only if $\beta X+P$ is so.
From (a) we get the value of $X$,
\begin{equation}\label{equa:leX}
X=-(\tau-\beta^*)(\pi+P^*)^{-1}.
\end{equation}
Also from (b) we obtain
$$b=\frac{1}{2}(\pi+P^*)\left[(\beta X +P)^*\right]^{-1}$$
and the expression of $Y$ follows from the fact that $Y$ is the skew-symmetric part of $b^{-1}$, that is, 
\begin{equation}\label{equa:ygrecq}
2Y=b^{-1}-(b^{-1})^*.
\end{equation} 
We need not the explicit expression.
If we replace those values in \eqref{equa:gammaA}, we get   $
\gamma^A(v)
=\block{\tau}{\pi}
$,
so we have proved the existence of a right inverse to the map $\gamma^A\colon \Gamma^x\to \Omega^x$. Since $\gamma^A$ is injective we obtain the desired result.
\end{proof}

\begin{remark}
For any Stiefel manifold it is possible to prove that the domain of injectivity $\Gamma^x$ is not the whole vector space $T_x\OO_{n,k}$.
\end{remark}

\begin{definition}\label{def:Cayleyopen}
For any $x=\block{T}{P}\in \OO_{n,k}$, the open subset
$$\Omega^x=
\left\{
\block{\tau}{\pi}\in \OO_{n,k}\mid \pi+P^* \text{ invertible}\right\}$$
is called 
\emph{a Cayley open subset of the Stiefel manifold.}
\end{definition}

We continue with  an explicit trivialization of the fibration $\rho$ over each Cayley open set.

\begin{proposition}\label{prop:localsection}
Let $x=\block{T}{P}\in \OO_{n,k}$  and let
$\Omega^x$ be 
 the open subset
of the elements 
$\block{\tau}{\pi}\in \OO_{n,k}$ such that $\pi+P^*$ is invertible.
Then the projection $\rho\colon \G(n)\to \OO_{n,k}$ admits a local section
$s^A\colon  
\Omega^x
\to \G(n)$.
\end{proposition}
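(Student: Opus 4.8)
The plan is to realize $s^A$ as a composite of three maps that are all at our disposal. By part~2) of \thmref{thm:propcayley} the Cayley transform restricts to a diffeomorphism $\gamma^A\colon \Gamma^x\to\Omega^x$, and, as recalled in the introduction, the group Cayley transform restricts to a diffeomorphism $c_A\colon T_A\G(n)\to \Omega(A^*)\cap\G(n)\subset\G(n)$. Between them I would interpose the linear identification
\[
\theta\colon T_x\OO_{n,k}\xrightarrow{\ \cong\ }(T_A\G(n-k))^\perp\subset T_A\G(n),\qquad
\theta\Big(A\block{X}{Y}\Big)=A\begin{pmatrix}0&X\cr -X^*&Y\end{pmatrix},
\]
used throughout \secref{sec:CayleyStiefel}, and set
\[
s^A:=c_A\circ\theta\circ(\gamma^A)^{-1}\colon \Omega^x\xrightarrow{\ (\gamma^A)^{-1}\ }\Gamma^x\xrightarrow{\ \theta\ }T_A\G(n)\xrightarrow{\ c_A\ }\G(n).
\]

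Each factor is smooth: $(\gamma^A)^{-1}$ because $\gamma^A$ is a diffeomorphism onto $\Omega^x$ (with $X,Y$ given explicitly by Equations \eqref{equa:leX} and \eqref{equa:ygrecq}), $\theta$ because it is linear, and $c_A$ because it is a diffeomorphism; note that $\theta$ takes values in $T_A\G(n)$, on which $c_A$ is defined, since $A+AM=A(I_n+M)$ is invertible for every skew-Hermitian $M$. Hence $s^A$ is a smooth map $\Omega^x\to\G(n)$ (indeed an embedding onto $c_A(\theta(\Gamma^x))$).

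It remains to check that $s^A$ is a section of $\rho$, that is, $\rho\circ s^A=\id_{\Omega^x}$. Using the defining relation $\rho\circ c_A=\gamma^A\circ\rho_{*A}$ from \thmref{thm:propcayley}, I would compute
\[
\rho\circ s^A=\rho\circ c_A\circ\theta\circ(\gamma^A)^{-1}=\gamma^A\circ(\rho_{*A}\circ\theta)\circ(\gamma^A)^{-1}.
\]
The whole argument then reduces to the single identity $\rho_{*A}\circ\theta=\id_{T_x\OO_{n,k}}$, after which $\rho\circ s^A=\gamma^A\circ(\gamma^A)^{-1}=\id_{\Omega^x}$. This identity is the main (and only non-formal) point: it says that $\theta$ is a section of the differential of the bundle projection. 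I would verify it by differentiating the curve $t\mapsto A\exp(tM)$ with $M=\begin{pmatrix}0&X\cr -X^*&Y\end{pmatrix}$, so that $\theta(v)=AM$ for $v=A\block{X}{Y}$, and reading off its last $k$ columns: this derivative is the last $k$ columns of $AM$, namely $A\block{X}{Y}=v$. In particular $\ker\rho_{*A}=T_A\G(n-k)$ and $\rho_{*A}$ restricts to the inverse of $\theta$ on $(T_A\G(n-k))^\perp$. Everything else is a formal composition of the diffeomorphisms furnished by \thmref{thm:propcayley} and the introduction. Finally, the section yields the announced trivialization of $\rho$ over $\Omega^x$ via $(y,B)\mapsto s^A(y)\cdot\iota(B)$.
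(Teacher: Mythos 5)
Your proposal is correct and is essentially the paper's own proof: the paper also defines $s^A=c_A\circ({\gamma^A}_{|\Gamma^x})^{-1}$ and checks $\rho\circ s^A=\id_{\Omega^x}$ using $\rho\circ c_A=\gamma^A$ on $\Gamma^x$. The only difference is presentational — you make the identification $\theta\colon T_x\OO_{n,k}\cong(T_A\G(n-k))^\perp$ explicit and verify $\rho_{*A}\circ\theta=\id$, a point the paper absorbs into its standing identification.
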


\begin{proof}
With the identification $T_x\OO_{n,k}\cong (T_A\G(n-k))^{\bot}$, and from the definition of $\gamma^A$ we can write $\gamma^A=\rho\circ c_A$ on $\Gamma^x$. Moreover, we have proved in \thmref{thm:propcayley}
that the restriction
${\gamma^A}_{|\Gamma^x}\colon \Gamma^x\to \Omega^x$ is a diffeomorphism whose inverse 
is  denoted
$({\gamma^A}_{|\Gamma^x})^{-1}$.
We set
$$s^A=c_A\circ  ({\gamma^A}_{|\Gamma^x})^{-1}\colon \Omega^x\to \G(n)$$ and verify
$$\rho\circ s^A=\rho\circ c_A\circ ({\gamma^A}_{|\Gamma^x})^{-1}
={\gamma^A}_{|\Gamma^x}\circ ({\gamma^A}_{|\Gamma^x})^{-1}=\id_{\Omega^x}.$$
Notice that $s^A(\Omega^x)\subset \Omega(A^*)\cap \G(n)$.
\end{proof}

An explicit formula for $s^A$ could be obtained from those of $c_A$ and $({\gamma^A}_{|\Gamma^x})^{-1}$.

\begin{proof}[Proof of \thmref{thm:cayleycontraction}]
We choose $A=\begin{pmatrix}
\alpha&T\cr
\beta&P
\end{pmatrix}\in\G(n)$.
Let $s^A\colon  
\Omega^x
\to \G(n)$
be the local section of \propref{prop:localsection}. 
With the notations of the statement, we consider the application
$H\colon \Omega^x\times [0,1]\to \OO_{n,k}$
defined by
$$H(y,t)=\rho(c_A(tc_{A^*}(s^A(y)))).$$
This map verifies 
$H(y,0)=\rho(c_A(0))=\rho(A^*)=\block{\beta^*}{P^*}=\gamma^A(0)$
and
$H(y,1)=\rho(s^A(y))=y$.
Therefore, it is a contraction of $\Omega^x$ on the point $\gamma^A(0)$.
\end{proof}

\section{Some applications}\label{sec:appli}

\subsection{Lusternik-Schnirelmann category of some quaternionic Stiefel manifolds}
Let $\K=\HH$ be the algebra of quaternions, $\G(n)=\Sp(n)$ the symplectic group and $\OO_{n,k}=\XX_{n,k}$ the quaternionic Stiefel manifold. With the notations of the proof of \thmref{thm:cayleycontraction}, we observe that, in general, the point $\gamma^A(0)$ 
does not belong to $\Omega^x$. Therefore, our proof does not imply the contractibility of $\Omega^x$   
but only its contractibility in $\XX_{n,k}$, as stated.
This property suffices for our first application.

\smallskip
Recall that the Lusternik-Schnirelmann category (henceforth LS-category) of a topological space $X$ 
is 
the least integer $m \geq 0$ such that $X$ admits a covering by $(m+1)$ open sets which are
contractible in $X$, see \cite{CornLuptOprTan2003} for more details.  We denote it $\cat X$. 

\smallskip
The LS-category has applications in a wide range of fields coming from dynamical systems
to homotopy theory, but it has also proven to be
 difficult to determine. 
For instance, a longstanding problem is the computation of the
LS-category of Lie groups. In the case of unitary groups, W.~Singhof (\cite{Singhof75}) proved that
$\cat \u(n) = n$ by using an argument based on the eigenvalues. 
Nevertheless, this method cannot be carried out for the quaternionic group $\Sp(n)$, see \cite{MR2566482}.
However, some results have been obtained for small $n$ as
$\cat \Sp(2) = 3$ (\cite{MR0182969}) and $\cat \Sp(3) = 5$ (\cite{MR2022385}) and for the determination of some bounds as
 $\cat \Sp(n) \geq n + 2$ when $n \geq 3$ (\cite{MR2039767})
 and  $\cat \Sp(n) \leq \binom{n+1}{2}
 $ (\cite{MacPer2013}).
The quaternionic Stiefel manifolds $\XX_{n,k}$ are more accessible in certain ranges. 
For instance, we know that $\cat \XX_{n,k} =k$
when $n\geq 2k$. For proving that in \cite{Nishimoto2007} T. Nishimoto uses the number of
eigenvalues of a complex matrix in a way similar to Singhof's approach. 
This has also been established in \cite{KadzMimu2011}  by H. Kadzisa and M. Mimura 
from the Morse-Bott functions defined on $\XX_{n,k}$.
In the next proposition, we give a short proof of this result with the Cayley open subsets
of \defref{def:Cayleyopen}.

\begin{proposition}[\cite{Nishimoto2007}, \cite{KadzMimu2011}]
If $n\geq 2k$, we have $\cat \XX_{n,k}\geq 2k$.
\end{proposition}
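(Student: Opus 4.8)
The plan is to establish the bound cohomologically, since the Cayley open subsets of \defref{def:Cayleyopen} only supply members of a covering that are contractible in $\XX_{n,k}$, and hence bear on \emph{upper}, not lower, estimates of $\cat$. The two standard lower bounds (see \cite{CornLuptOprTan2003}) are the cup-length $\mathrm{cl}$, giving $\cat X\ge \mathrm{cl}(X)$, and its refinement by strict category weight: if cohomology classes $u_1,\dots,u_r$ have strict category weights $w_1,\dots,w_r$ and the product $u_1\cdots u_r$ is nonzero, then $\cat X\ge w_1+\cdots+w_r$. First I would pin down $H^*(\XX_{n,k})$ together with a nonzero top product, and then estimate the weights of its factors.

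For the cohomology I would use the fibration $\Sp(n-k)\xrightarrow{\iota}\Sp(n)\xrightarrow{\rho}\XX_{n,k}$ of the Introduction. Rationally $H^*(\Sp(m);\Q)=\Lambda(x_1,\dots,x_m)$ with $\deg x_i=4i-1$, and $\iota^*$ identifies $H^*(\Sp(n-k))$ with the subalgebra on $x_1,\dots,x_{n-k}$. Since $\iota^*$ is surjective, $\Sp(n-k)$ is totally non-homologous to zero in $\Sp(n)$, the Serre spectral sequence of $\rho$ collapses, and one obtains
$$H^*(\XX_{n,k};\Q)=\Lambda(u_1,\dots,u_k),\qquad \deg u_i=4(n-k+i)-1.$$
The product $u_1\cdots u_k$ generates the top rational cohomology, hence is nonzero, so $\mathrm{cl}(\XX_{n,k})=k$ and $\cat\XX_{n,k}\ge k$. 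This is the first half of the statement.

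Closing the gap to $2k$ is the crux and cannot be done by cup-length, which equals $k$ in every coefficient field since the cohomology is exterior on $k$ generators. The only standard lower-bound tool that can exceed cup-length is strict category weight, so the plan is to show that each transgressive generator $u_i$ — coming from the quaternionic fibre $\Sp(n-k)$ — carries strict category weight $2$ in the range $n\ge 2k$, whereupon the nonzero product forces $\cat\XX_{n,k}\ge 2k$; this is the analogue of the phenomenon by which $\cat\Sp(n)$ exceeds its cup-length, e.g. $\cat\Sp(2)=3>2$. The main obstacle is precisely this weight estimate: one must produce, for each $u_i$, secondary operations of Hopf-invariant or Steenrod type reflecting the quaternionic attaching maps, detect $u_i$ with weight $2$, and verify that these detections survive in the product. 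Controlling these secondary data, and isolating where the hypothesis $n\ge 2k$ is genuinely used, is where the real work lies.
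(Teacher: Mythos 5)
Your plan is aimed at a statement that is a misprint, and this derails it fatally. The sentence immediately before the proposition records the known result $\cat \XX_{n,k}=k$ when $n\geq 2k$ (Nishimoto, Kadzisa--Mimura) and announces ``a short proof of this result'': the intended content is the \emph{upper} bound $\cat \XX_{n,k}\leq k$, obtained by exhibiting an open cover by $k+1$ sets contractible in $\XX_{n,k}$; the printed ``$\geq 2k$'' is false for every $k\geq 1$ in this range. Consequently the step you yourself flag as the crux --- strict category weight $2$ for each generator $u_i$ --- is not merely hard but unprovable: if $u_1\cdots u_k\neq 0$ with each $u_i$ of weight $2$, you would get $\cat \XX_{n,k}\geq 2k>k$, contradicting the equality $\cat \XX_{n,k}=k$. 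The analogy with $\cat \Sp(2)=3>2$ does not transfer; in the stable range $n\geq 2k$ the category of $\XX_{n,k}$ equals its cup-length. Your opening dismissal of the Cayley open subsets as irrelevant because they ``bear on upper, not lower, estimates'' is precisely backwards relative to what the proposition is for.

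The paper's actual argument is the covering one. For $\theta\in\,]0,\pi/2[$ set $x_{\theta}=\block{T_{\theta}}{P_{\theta}}$ with $P_{\theta}=(\cos\theta)I_k$ and $T_{\theta}=\block{0}{(\sin\theta)I_k}$; this is exactly where $n\geq 2k$ is used, since the block $T_{\theta}\in\HH^{(n-k)\times k}$ requires $n-k\geq k$ (this answers the question you raise at the end about where the hypothesis enters). By \thmref{thm:cayleycontraction}, each Cayley open subset $\Omega^{x_{\theta}}$, consisting of the frames $\block{\tau}{\pi}$ with $\pi+(\cos\theta)I_k$ invertible, is contractible in $\XX_{n,k}$. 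Choosing $k+1$ distinct angles $\theta_0<\dots<\theta_k$, a matrix $\pi\in\HH^{k\times k}$ lying outside every $\Omega^{x_{\theta_i}}$ would have $k+1$ distinct real eigenvalues $-\cos\theta_i$, which is impossible for a $k\times k$ quaternionic matrix; hence these $k+1$ sets cover $\XX_{n,k}$ and $\cat \XX_{n,k}\leq k$. For what it is worth, your rational computation is sound: the collapse of the Serre spectral sequence of $\Sp(n-k)\to\Sp(n)\to\XX_{n,k}$ gives $H^*(\XX_{n,k};\Q)=\Lambda(u_1,\dots,u_k)$ with nonzero top product, so $\cat \XX_{n,k}\geq k$. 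That is the complementary half which, combined with the paper's covering bound, yields the equality the proposition was meant to assert --- but it is not, and cannot be pushed to be, a proof of ``$\geq 2k$''.
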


\begin{proof}
Let $\theta\in]0,\pi/2[$ and take $x_{\theta}=\block{T_{\theta}}{P_{\theta}} \in \XX_{n,k}$, 
with  $P_{\theta}=(\cos \theta) I_k$ and $T_{\theta}=\block{0}{(\sin\theta) I_k}$.
We know from \thmref{thm:cayleycontraction} that $\Omega^{x_{\theta}}$ is contractible in $\XX_{n,k}$.

We choose $(k+1)$ numbers $\theta_i$ such that
$0<\theta_0<\theta_2<\dots<\theta_{k}<\pi/2$ and observe that an element 
$\pi\in \HH^{k\times k}$ such that $\pi+P_{\theta_i}$ is not invertible for all $i$
should have $(k+1)$ distinct real eigenvalues. This is impossible and the family 
$\left(\Omega^{x_{\theta_i}}\right)_{0\leq i\leq k}$ is an open cover of $\XX_{n,k}$
by  subsets contractible in~$\XX_{n,k}$.
\end{proof}

\subsection{Optimisation theory}
Let $\G(n)=\OO(n)$ be the orthogonal group and $\OO_{n,k}=\VV_{n,k}$ the real  Stiefel manifold. In optimisation theory, the problems with orthogonality constraints are widely known and have
concrete applications in many different areas (see \cite{ABSIL} for instance). A typical example is looking for $k$ orthogonal $n$-vectors that are optimal with respect to some parameter $f$ like cost or likehood.  This kind of problems can be seen as optimisation problems on a real Stiefel manifold. 

The most popular method for this study is the gradient descent method which can be summarized as follows. Let $x=x_0$ be an initial trial point in the Stiefel manifold $\VV_{n,k}$  and let $F$ be the negative gradient of $f$ at $x$. Then a curve
$\alpha(t)$ must be found on the manifold such that $\alpha(0)=x$ and $\alpha^\prime(0)=F$. By fixing a step size $\tau$ small enough,  the next iterate  is obtained by curvilinear search, that is, putting $x_1=\alpha(\tau)$. Under certain conditions the sequence $x_0, x_1,\dots$ converges to a local minimun of the function $f$.

Most existing methods either use matrix factorizations (such as the SVD decomposition) or 
require the determination of geodesic curves, which is computationally expensive. 
A different algorithm has been proposed in \cite{MR3127080}, where the curve is not a geodesic 
but  is constructed from the Cayley transform in the orthogonal group. 
Specifically, one considers the skew-symmetric matrix $A=Fx^*-xF^*$ and computes the Cayley transform
 $Q(t)=c_I(t A)$ \emph{on the group} $\OO(n)$. Since the group acts on the Stiefel manifold, the desired curve can be given by $\alpha(t)=Q(t)x$.

Our construction of a Cayley transform  is intrinsic to the Stiefel manifold and should lead to more efficient methods.

\section*{Acknowledgements}
This research was supported by Xunta de Galicia 2015-PG006, by ESF ACAT Grant 6442 and 
by the ANR-11-LABX-0007-01 ``CEMPI'' which gave us  the opportunity to work together. 
The three authors are also partially supported by the MINECO and 
FEDER research project MTM2016-78647-P.

\providecommand{\bysame}{\leavevmode\hbox to3em{\hrulefill}\thinspace}
\providecommand{\MR}{\relax\ifhmode\unskip\space\fi MR }
\providecommand{\MRhref}[2]{%
  \href{http://www.ams.org/mathscinet-getitem?mr=#1}{#2}
}
\providecommand{\href}[2]{#2}

\end{document}